\newtheorem{Teo}{Theorem}[section]
\newtheorem{prop}[Teo]{Proposition}
\newtheorem{theo}[Teo]{Theorem}
\newtheorem{coro}[Teo]{Corollary}
\theoremstyle{definition}
\newtheorem{definition}[Teo]{Definition}
\newtheorem{ques}[Teo]{Question}
\newtheorem{claim}[Teo]{Claim}
\theoremstyle{remark}
\newtheorem{remark}[Teo]{Remark}
\newcommand{\bd}{\partial}
\newcommand{\F}{\mathcal{F}}
\newcommand{\Su}{\mathcal{S}}
\begin{document}

\keywords{}

\title{Additivity of circular width}

\author{M. Eudave-Mu\~noz}
\address{ \hskip-\parindent
Mario Eudave-Mu\~noz \\
 Instituto de Matem\'aticas\\
 Universidad Nacional Aut\'onoma de M\'exico\\
  M\'exico, D.F.\\
 MX}
\email{mario@matem.unam.mx}

\author{F. Manjarrez-Guti\'errez}
\address{ \hskip-\parindent
Fabiola Manjarrez-Guti\'errez \\
 Instituto de Matem\'aticas\\
 Universidad Nacional Aut\'onoma de M\'exico\\
  M\'exico, D.F.\\
 MX}
\email{fabiola@matem.unam.mx}

\thanks{Research supported by UNAM }
\date{\today}
\subjclass{57M25}

\begin{abstract}
 We show that circular width is preserved under connected sum of knots for some cases.
\end{abstract}

\maketitle

\section{Introduction}
In \cite{MG} the second author defined \textit{circular thin position} and \textit{circular width} for a knot in $S^3$. The idea is to find collections of surfaces  $\{S_i\}_{i=1}^n$ and $\{F_i\}_{i=1}^n$, not necessarily connected, which are properly embedded in the knot exterior, such that  each $F_i$ and each $S_i$ contains a Seifert surface for the knot. When the knot complement is cut open along the collection $\{F_i\}_{i=1}^n$ the result is a collection of disjoint submanifolds whose Heegaard surfaces are the $S_i's$.  We assign a complexity $c(S_i)$ to each $S_i$, and  define the circular width of the exterior of the knot, $cw(E(K))$, as the minimal  ordered $n$-tuple that encodes these complexities.

A decomposition that realizes  the circular width of the knot is called circular thin position of the knot. Circular thin position guarantees that all the $F_i's$ are incompressible and all the $S_i's$ are weakly incompressible. Hence when the knot complement is in circular thin position we obtain a nice sequence of Seifert surfaces which are alternately incompressible and weakly incompressible.

Given two knots $K_1$ and $K_2$ in $S^3$, we can take their connected sum $K_1\sharp K_2$, it is natural to study the behavior of circular width under this operation. In \cite{MG}  an upper  bound for the circular width of $K_1 \sharp K_2$ is given, which depends on the circular width of the original knot exteriors. Namely;

\begin{equation}
cw(E(K_1\sharp K_2)) \leq cw(E(K_1))\sharp cw(E(K_2))
\label{aditivo}
\end{equation}

In this paper we analize knots in $S^3$ having a circular thin position containing a minimal genus Seifert surface and we prove that equality in equation \eqref{aditivo} holds.

Our main result is:

\begin{theo}
\label{teoremon}
Let $K_1$ and $K_2$ be knots in $S^3$. The equation 

$cw(E(K_1\sharp K_2)) = cw(E(K_1))\sharp cw(E(K_2))$

holds for the following cases:

\begin{enumerate}
\item $K_1$ and $K_2$ are fibered knots.
\item $K_1$ is fibered and $K_2$ is not fibered.
\item $K_1$ and $K_2$ are non-fibered knots. $E(K_1)$ and $E(K_2)$ have circular thin positions containing minimal genus Seifert surfaces as a thin level.
\end{enumerate}

\end{theo}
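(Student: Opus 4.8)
The plan is to note that, since inequality \eqref{aditivo} is already available, only the reverse inequality $cw(E(K_1\sharp K_2)) \geq cw(E(K_1))\,\sharp\,cw(E(K_2))$ needs to be established, and to deduce it from the following assertion: given \emph{any} circular thin position of $E(K)$, where $K=K_1\sharp K_2$, one can produce circular decompositions $\mathcal D_1$ of $E(K_1)$ and $\mathcal D_2$ of $E(K_2)$ with $\mathrm{width}(\mathcal D_1)\,\sharp\,\mathrm{width}(\mathcal D_2)\leq cw(E(K))$. Indeed $\mathrm{width}(\mathcal D_\ell)\geq cw(E(K_\ell))$ by definition of circular width, and the operation $\sharp$ on complexity tuples is monotone, so this yields $cw(E(K))\geq \mathrm{width}(\mathcal D_1)\,\sharp\,\mathrm{width}(\mathcal D_2)\geq cw(E(K_1))\,\sharp\,cw(E(K_2))$, and combined with \eqref{aditivo} the equality follows.

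To build $\mathcal D_1$ and $\mathcal D_2$ I would start from the summing sphere $\Sigma$, which meets $K$ transversely in two points; then $A=\Sigma\cap E(K)$ is an essential annulus with meridional boundary, and tubing $\Sigma$ along one factor of $K$ produces the associated swallow--follow torus $T$, essential in $E(K)$ and disjoint from $\partial N(K)$. Fix a circular thin position realized by thick surfaces $\{S_i\}$ (weakly incompressible) and thin surfaces $\{F_i\}$ (incompressible), each containing a Seifert surface. The first step is to isotope $A$ (equivalently $T$) into a position compatible with this decomposition. Make $A$ transverse to every $S_i$ and $F_i$; since each of these surfaces contains a Seifert surface, its boundary is a longitude and necessarily meets $\partial A$, so intersections cannot be removed altogether and the goal is instead to reduce $A\cap(\bigcup F_i\cup\bigcup S_i)$ to the minimal ``vertical'' pattern. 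Circles of intersection are eliminated by the usual innermost-disk argument: an innermost disk on $A$ is a compressing disk which, by incompressibility of the $F_i$ and by the combination of weak incompressibility of the $S_i$ with the essentiality of $A$, can be used to reduce the intersection --- this is the Scharlemann ``no nesting'' type step, and it is exactly here that weak incompressibility of the thick surfaces is needed. Arcs of $A\cap S_i$ are then handled by an outermost-arc argument of the same flavour, pushing $A$ off the thick surfaces.

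In the fibered cases (1) and (2), on the fibered factor the region of $E(K)$ between consecutive thin levels is a product, a copy of $(\text{fiber})\times I$, and an essential annulus meeting a product region must be vertical, i.e.\ of the form $\gamma\times I$ for an essential spanning arc $\gamma$ of the fiber; this rigidity gives the good position outright. In case (3) neither factor is fibered, and one instead uses the standing hypothesis: choose minimal genus Seifert surfaces as thin levels in the circular thin positions of $E(K_1)$ and $E(K_2)$, Murasugi-sum them to a minimal genus Seifert surface $G$ of $K$, and compare the given thin position of $E(K)$ with this decomposition so as to force $A$ to meet a thin level in a single essential arc $\gamma$ and to be vertical or disjoint elsewhere --- minimality of the genus being what prevents $A$ from being driven through the thick surfaces. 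In every case $\gamma$ exhibits the Seifert surface it lies in as a Murasugi sum of Seifert surfaces for $K_1$ and $K_2$, and the complexity $c$ behaves additively under this splitting.

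Finally I would cut $E(K)$ along $A$ and attach a meridional $2$-handle to each resulting boundary annulus --- equivalently, restore the two punctured balls that $\Sigma$ bounded in $S^3$ --- obtaining $E(K_1)\sqcup E(K_2)$. Because $A$ is vertical, each $S_i$ restricts to a thick surface on one side, on the other side, or splits into one on each side, and likewise for the $F_i$, with the distinguished level contributing a Murasugi summand to each factor; one checks that compression-body structure and Seifert-surface-containment are inherited, so the restrictions assemble into circular decompositions $\mathcal D_1,\mathcal D_2$. By additivity of $c$ under the splitting along $\gamma$ and by the way $\sharp$ is defined on tuples in \cite{MG}, the complexities of the pieces of each $S_i$ recombine to give $\mathrm{width}(\mathcal D_1)\,\sharp\,\mathrm{width}(\mathcal D_2)\leq cw(E(K))$, completing the argument. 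I expect the main obstacle to be precisely the good-position step in case (3): controlling the arcs of $A$ on the weakly incompressible thick surfaces and genuinely forcing $A$ into a minimal genus thin level, rather than merely reducing intersection numbers; the fibered cases are comparatively soft because the product structure dictates the position of an essential annulus.
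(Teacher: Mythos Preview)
Your overall strategy --- put the decomposing annulus $A$ in good position with respect to a circular thin position of $E(K)$, cut along it to obtain induced circular decompositions $\mathcal D_1,\mathcal D_2$ of the factors, and compare widths --- is exactly the paper's approach (Proposition~\ref{prop1} and Corollary~\ref{coro1}). However, your account of case (3) misplaces the role of the minimal-genus hypothesis, and this is a genuine gap.

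The good-position step is uniform and does \emph{not} use minimal genus: Proposition~\ref{prop1} shows, for \emph{any} circular (locally) thin position, that $A$ can be isotoped to meet every thin level $F_i$ and every thick level $S_i$ in a single essential arc. (In particular your phrase ``pushing $A$ off the thick surfaces'' is not what happens; $A$ necessarily meets each $S_i$, since $\partial A$ is meridional and $\partial S_i$ longitudinal.) So your proposed mechanism ``minimality of the genus being what prevents $A$ from being driven through the thick surfaces'' is not where the hypothesis enters, and there is nothing to force $A$ to land on a minimal-genus thin level of $E(K)$.

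Where minimal genus is actually needed is in your monotonicity step $\mathrm{width}(\mathcal D_1)\,\sharp\,\mathrm{width}(\mathcal D_2)\geq cw(E(K_1))\,\sharp\,cw(E(K_2))$. The operation $\sharp$ is \emph{not} a function of the thick-level tuples alone: it adds to each thick complexity the complexity of a chosen smallest \emph{thin} level of the other factor. Cutting $E(K)$ along $A$ produces specific thin Seifert surfaces $T_1'$ for $K_1$ and $T_1''$ for $K_2$, and the thick complexities of $E(K)$ are $c(U_i')+c(T_1'')$ and $c(U_j'')+c(T_1')$. To compare these with $cw(E(K_1))\sharp cw(E(K_2))=\{c(G_i)+c(R_1)\}\cup\{c(S_j)+c(F_1)\}$ you need both $\{c(G_i)\}\leq\{c(U_i')\}$, $\{c(S_j)\}\leq\{c(U_j'')\}$ (which follow from thinness) \emph{and} $c(F_1)\leq c(T_1')$, $c(R_1)\leq c(T_1'')$. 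The latter inequalities hold precisely because $F_1,R_1$ are assumed to be minimal-genus Seifert surfaces; without that assumption there is no reason the thin levels produced by the cut dominate those appearing in the definition of $\sharp$. Once you have these four inequalities, Proposition~\ref{ntuples} (monotonicity of the tuple-union) finishes the argument. So the fix is: drop the attempt to use minimal genus geometrically, carry out the good-position and cutting steps uniformly, and invoke the hypothesis of (3) only at the numerical comparison of thin-level complexities.
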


This paper is organized as follows.  In Section \ref{prelim} we give definitions and some facts about knots and Heegaard splittings. Circular thin position is defined in Section \ref{circularthin}, we also discuss the behavior of circular width under connected sum of knots.  In Section \ref{orderedtuples} we study in detail ordered n-tuples, we prove Proposition \ref{ntuples} which is a technical result needed to prove our main theorem.  In Section \ref{additivity} we prove Proposition \ref{prop1} and Corollary \ref{coro1} which allow us to construct a circular handle decomposition for each summand in a connected sum of two knots, we also prove Theorem \ref{teoremon}.

\section{Preliminaries}
\label{prelim}
In this section we begin by briefly recalling some notions for the theory of knots, Seifert surfaces  and Heegaard splittings. 

\subsection{Knots and surfaces}
This section is devoted to  definitions related to knots and Seifert surfaces, as well as to  properties of Seifert surfaces under two operations on knots. The definitions and operations are mostly classical.

Let $K$ be a knot in $S^3$. The knot complement will be denoted by $C_K = S^3 \setminus K$ . An open  tubular neighborhood of $K$  will be denoted by $N(K)$ and the exterior of the knot $K$ by $E(K)=S^3 \setminus N(K)$.

A Seifert surface $R'$ for a knot $K$ is an oriented compact 2-submanifold of $S^3$ with no closed components such that $\bd R' = K$. The intersection of $R'$ with $E(K)$, $R= R' \cap E(K)$, is also called a Seifert surface for $K$.

The \textit{genus} of a knot $K$ is the least genus of all its Seifert surfaces. A surface realizing the genus of a knot is called a \textit{minimal genus Seifer surface}.

Since $R$ is two sided we can specify a $+side$ and a $-side$ of $R$. We say that a disk $D$, such that $\bd D \subset R$, lies on the $+side$ (resp. in the $-side$) of $R$ if the collar of its boundary lies on the $+side$ (resp. in the $-side$) of $R$.

\begin{definition}
Let $S$ be a surface in a 3-manifold $M$. We say that $S$ is \textit{compressible} if there is a  2-disk $D \subset M$ such that $D \cap int(S) = \bd D$ does not bound a disk in $S$.  $D$ is a compressing disk for $S$. If $S$ is not compressible, it is said to be \textit{incompressible}. 

We say that $S$ is \textit{strongly compressible} if there are  two compressing disks,  $D_1$ lying on the +side of $S$ and $D_2$ lying on the $-$side of $S$,  with $\bd D_1$ and $\bd D_2$ disjoint  essential closed curves in $S$. Otherwise we say that $S$ is \textit{weakly incompressible}.
\end{definition}

\begin{definition}
 \label{connected sum}
The \textit{connected sum of two knots} $K_1$ and $K_2$, denoted by $K_1 \sharp K_2$, is constructed by removing a short segment from each $K_i$ and joining each free end of $K_1$ to a different end of $K_2$ to form a new knot. This operation is well-defined up to orientation. 
There is a 2-sphere $\Sigma$ that intersects $K_1\sharp K_2$ in two points and decomposes it in $K_1$ and $K_2$.  $\Sigma$ is called a \textit{decomposing} sphere.

Given Seifert surfaces $S_1$ and $S_2$  for $K_1$ and $K_2$, respectively,  one may construct a Seifert surface for the knot $K_1 \sharp K_2$ by taking a boundary connected sum of $S_1$ and $S_2$, denoted by $S_1 \sharp _{\bd} S_2$. 

\end{definition}

\subsection{Heegaard splittings}
All manifolds will be orientable.

\begin{definition}
A \textit {compression body} $W$ is a cobordism rel $\partial$ between surfaces $\partial_{+}W$ and  $\partial_{-}W$ such that $W \cong \partial_{+}W \times [0,1] \cup$2-handles$\cup$3-handles, where the 2-handles are attached along $\partial_{+}W \times{1}$ and any resulting 2-sphere is capped off with a 3-handle. If $\partial_{-}W \neq \emptyset$ and $W$ is connected, then $W$ is obtained from $\partial_{-}W  \times  I$ by attaching a number of 1-handles along disks on $\partial_{-}W \times \{1\}$ where $\partial_{-}W$ corresponds to $\partial_{-}W \times \{0\}$.

\end{definition}

\begin{figure}[htp]
\centering
\includegraphics[width=5cm]{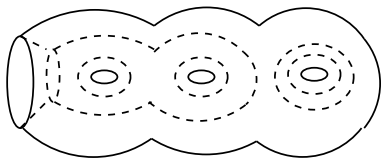}
\caption{A compression body $W$ with $\bd_-W$ a genus 2 surface with one boundary component and a genus 1 surface. $\bd_+W$ is a genus 3 surface with one boundary component}\label{fig.1}
\end{figure}

\begin{definition}
A \textit{3-manifold triad $(M;N,N')$ } is a cobordism $M$ rel $\partial$ between surfaces $N$ and $N'$. Thus $N$ and $N'$ are disjoint surfaces in $\partial M$ with $\partial N \cong \partial N'$ such that $\partial M= N\cup N' \cup (\partial N \times I)$.
\end{definition}

\begin{definition}
A \textit{Heegaard splitting} of $(M;N,N')$ is a pair of compression bodies $(W,W')$ such that $W\cup W'=M$,  $W\cap W'=\partial_{+}W=\partial_{+}W'(=S)$ and $\partial_{-}W=N$, $\partial_{-}W'=N'$. 

$S$ is called a Heegaard surface and $\partial S \cong \partial N$.

The \textit{genus} of a Heegaard splitting is defined by the genus of the Heegaard surface.

A Heegaard splitting $(W,W')$ is said to be \textit{weakly reducible} if there are disks $D_1 \subset W$ and $D_2 \subset W'$ with $\bd D_i \subset S$ an essential curve, for $i=1,2$,  and such that $\bd D_1 \cap \bd D_2 = \emptyset$.

If the Heegaard splitting is not weakly reducible then it is said to be \textit{strongly irreducible}.

\end{definition}

\begin{definition}
A Heegaard splitting $M=H_1 \cup_{S} H_2$ is $\bd$-reducible if there is a $\bd$-reducing disk for $M$ which intersects $S$ in a single curve.
\end{definition}

\begin{prop}
(see \cite{S} Proposition 3.6 )
Any Heegaard  splitting of a $\bd$-reducible 3-manifold is $\bd$-reducible.
\label{boundred}
\end{prop}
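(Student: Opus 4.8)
The plan is to start from the hypothesis and extract the desired disk by a minimal-position argument. Since $M$ is $\partial$-reducible, fix a $\partial$-reducing disk $D$, that is, a properly embedded disk with $\partial D$ an essential curve in $\partial M$, and let $S=\partial_{+}W=\partial_{+}W'$ be the Heegaard surface of the given splitting $(W,W')$. The goal is to produce a (possibly different) $\partial$-reducing disk meeting $S$ in a single curve. First I would put $D$ in general position with respect to $S$ and isotope it so as to minimize the number of components of $D\cap S$, which a priori consists of arcs (with endpoints on $\partial D\cap\partial S$) and simple closed curves. Since any essential curve carried by the vertical annuli $\partial N\times I$ is isotopic into $N$ or $N'$, I would first slide $\partial D$ off the vertical boundary onto a negative boundary component, say $\partial_{-}W$; then $\partial D\cap\partial S=\emptyset$ and $D\cap S$ is a union of circles only.

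Next come the standard reductions, each exploiting a structural property of compression bodies. Because $\partial_{-}W$ is incompressible in $W$, the disk $D$ cannot lie entirely in $W$, since its boundary is essential in $\partial_{-}W$; hence $D\cap S\neq\emptyset$. If some circle of $D\cap S$ were trivial in $S$, an innermost such circle would bound a disk $E\subset S$ with interior disjoint from $D$, and using that each compression body is irreducible I would surger $D$ across $E$ to lower $|D\cap S|$, contradicting minimality. Hence every remaining circle of $D\cap S$ is essential in $S$. Any arcs, should $\partial D$ be forced to meet $\partial S$, would be treated symmetrically by outermost-arc $\partial$-compressions of $S$.

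The heart of the argument is to convert this minimal configuration into a single intersection curve. Let $c$ be a circle of $D\cap S$ that is innermost in $D$, bounding a subdisk $D_0\subset D$ whose interior is disjoint from $S$; then $D_0$ lies in one compression body, and as $c$ is essential in $S$ it is a compressing disk for $S$ on that side. Let $P\subset W$ be the planar component of $D\setminus S$ containing $\partial D$. If the only circle on $\partial P$ is the sole curve of $D\cap S$, then $D$ itself is the required $\partial$-reducing disk and we are done. Otherwise I would use the innermost compressing disk $D_0$, together with the annular pieces of $D\setminus S$ adjacent to it, to build a new properly embedded disk $D'$ with $\partial D'=\partial D$ and $|D'\cap S|<|D\cap S|$, and then induct on the number of intersection circles.

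The main obstacle is exactly this last reduction. Unlike a trivial circle, an essential circle of $D\cap S$ cannot be removed by capping it off inside $S$, so the count can only be decreased by replacing $D$ with a surgered disk; the delicate points are to guarantee that after the surgery the boundary curve is still essential in $\partial M$, so that $D'$ is genuinely a $\partial$-reducing disk, and that the number of intersection circles strictly drops. Both of these rely on the incompressibility of $\partial_{-}W$ and $\partial_{-}W'$ and on the irreducibility of the two compression bodies, and I expect that verifying them, namely ruling out the creation of new essential intersections while preserving an essential boundary, will be the technical core of the proof.
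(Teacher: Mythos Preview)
The paper does not supply its own proof of this proposition: it is quoted verbatim as Proposition~3.6 of Scharlemann's survey \cite{S} and used as a black box in the proof of Proposition~\ref{prop1}. So there is no in-paper argument to compare your attempt against.

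Judged on its own, your outline is fine through the removal of inessential circles, but the step you yourself flag as ``the heart of the argument'' is a genuine gap, not just a technicality. Once every circle of $D\cap S$ is essential in $S$, the surgery you propose does not exist. An innermost subdisk $D_0$ with essential boundary is a compressing disk for $S$, but there is nothing in $S$ to trade it against; capping or swapping only works when the curve bounds in $S$. Your plan to ``use $D_0$ together with the annular pieces of $D\setminus S$ adjacent to it to build a new disk $D'$ with $\partial D'=\partial D$ and $|D'\cap S|<|D\cap S|$'' is not a well-defined move: surgering the adjacent annulus along $D_0$ yields a disk whose boundary is the \emph{other} circle of that annulus, not $\partial D$, and iterating this simply walks outward through the nested circles without decreasing their number. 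Irreducibility of the compression bodies and incompressibility of $\partial_{-}W$, $\partial_{-}W'$ do not rescue this, because neither provides a disk in $S$ or in $M$ that lets you cancel an essential intersection curve while keeping $\partial D$ fixed.

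The argument in \cite{S} (going back to Haken and Casson--Gordon) avoids this obstruction by changing the complexity: one fixes a complete meridian system $\Delta$ for one compression body and studies the arcs of $D\cap\Delta$. An outermost arc on a disk of $\Delta$ cuts off a bigon that either exhibits the desired $\partial$-reduction of $S$, or allows an isotopy/handle-slide reducing $|D\cap\Delta|$, or reduces the genus of the splitting; one then inducts. The quantity that drops is $|D\cap\Delta|$ (together with the genus), not $|D\cap S|$, and that change of viewpoint is the missing idea in your proposal.
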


\section{Circular thin position}
\label{circularthin}
This was introduced by the second author in \cite{MG}. For sake of completeness we include some definitions and results.

Given a regular circled-valued  Morse function on the complement of a knot $C_K$,  $f: C_K \rightarrow S^1$, as in the case of real-valued Morse functions, there is a correspondence between $f$ and a handle decomposition for $E(K)$, namely 
\begin{center}
$E(K)= (R \times I)\cup N_1 \cup T_1 \cup N_2 \cup T_2 \cup...\cup N_k \cup T_k \cup b_3 /( R\times 0 \sim R\times 1)$, 
\end{center}

\noindent where $R$ is a  Seifert surface for $K$, $R\setminus K$ is a regular level surface of $f$, $N_i$ is a collection of 1-handles corresponding to index 1 critical points, $T_i$ is a collection of 2-handles corresponding to index 2 critical points and $b_3$ is a collection of 3-handles.

We will call this decomposition a \textit{circular handle decomposition} for $E(K)$.

Let us denote by $S_i$ the surface  $cl (\partial (( R \times I)\cup N_1\cup T_1...\cup N_i )\setminus \bd E(K) \setminus R \times 0)$ and let $F_{i+1}$ be the surface $cl(\partial (( R \times I)\cup N_1\cup T_1...\cup T_i )\setminus \partial E(K) \setminus R \times 0)$, where $cl$  means the closure. When $i=k$, $F_{k+1}= F_1= R$. Every $S_i$ and $F_i$ contains a Seifert surface for $K$; note that $F_i$ or $S_i$ may be disconnected.

The surfaces $S_i$ and $F_i$, for $i=1,2,...,k$ will be called \textit{level surfaces}.

A level surface $F_i$ is called a \textit{thin surface} and a level surface $S_i$ is called a \textit{thick surface}. 

Let  $W_i=($collar of $F_i)\cup N_i\cup T_i$. $W_i$ is divided by a copy of $S_i$ into two compression bodies $A_i=($collar of $F_i)\cup N_i$ and $B_i=($collar of $S_i)\cup T_i$. Thus $S_i$ describes a Heegaard splitting of $W_i$ into compression bodies $A_i$ and $B_i$, where  $\partial_{-}A_1=R$, $\partial_{+} A_i= \partial_{+}B_i=S_i$, $\bd_{-}B_i=\bd_{-}A_{i+1}=F_{i+1}$ ($i=1,2,...,k-1$), $\partial_{-}B_k=R$. Thus we can write

$E(K)= A_1 \cup _ {S_1} B_1 \bigcup _ {F_2} A_2 \cup _ {S_2} B_2 \bigcup _ {F_3} ... \bigcup _ {F_{k}} A_k \cup _ {S_k} B_k$.

Figure  \ref{cirdec} shows a schematic picture of a circular handle decomposition with level surfaces and compression bodies indicated.

\begin{figure}[htp]
\labellist 
\small\hair 2pt 
\pinlabel $F_1$ at 183 88
\pinlabel $S_1$ at 153 157
\pinlabel $F_2$ at 87 184
\pinlabel $S_2$ at 19 156
\pinlabel $F_3$ at -9 87
\pinlabel $S_3$ at 17 26
\pinlabel $F_4$ at 94 -7
\pinlabel $S_4$ at 149 15
\pinlabel $A_1$ at 217 145
\pinlabel $B_1$ at 141 204
\pinlabel $W_1$ at 220 208
\pinlabel $N_1$ at 137 107
\pinlabel $T_1$  at 105 141
\pinlabel $N_2$ at 66 145 
\pinlabel $T_2$  at 30 109 
\pinlabel $N_3$ at 29 68
\pinlabel $T_3$  at 63 33
\pinlabel $N_4$ at 106 34
\pinlabel $T_4$ at 140 67 
\endlabellist 

\centering
\includegraphics[width=5cm]{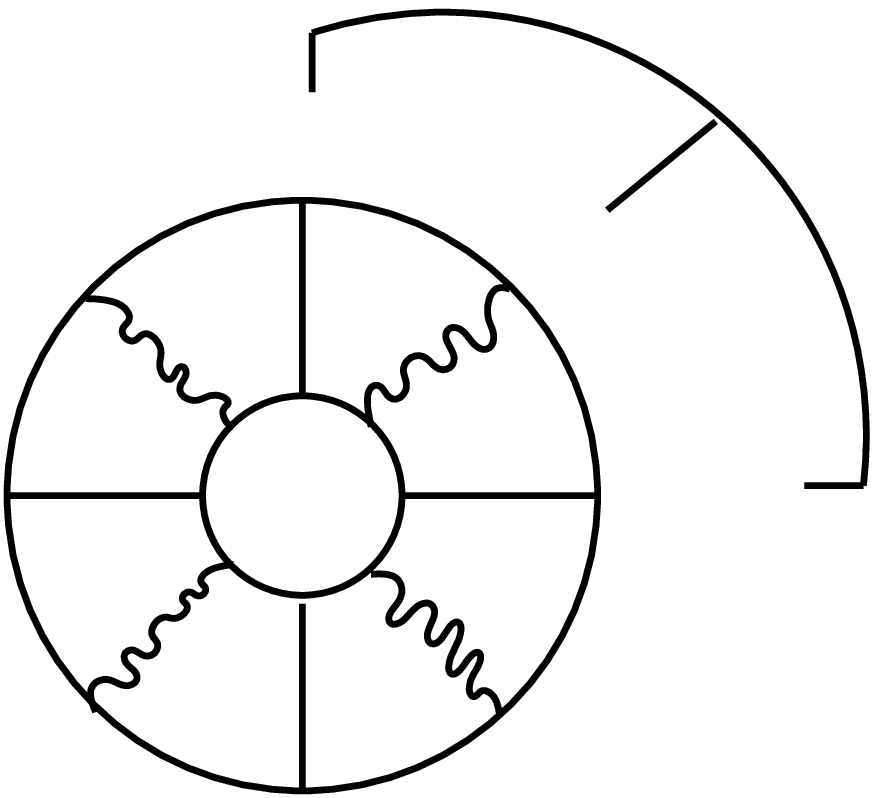}
\caption { Splitting of $E(K)$ into compression bodies}\label{cirdec}
\end{figure}

We wish to find a decomposition in which the $S_i$ are as simple as possible. 

\begin{definition}
For a compact connected surface $G$ different from $S^2$ or $D^2$ define the complexity of $G$, $c(G)$, to be  $c(G)=1-\chi(G)$.  If $G=S^2$ or $G=D^2$, set $c(G)=0$. If $G$  is disconnected we define $c(G)=\Sigma (c(G_i))$ where $G_i$ are the components of $G$.

Let $K$ be a knot in $S^3$.
Let $D$ be a circular handle decomposition for $E(K)$. Define the \textit{circular width of $E(K)$ with respect to the decomposition D , $cw(E(K),D)$}, to be the set of integers $\{ c(S_i),  1\leq i \leq k \} $. Arrange each multi-set  of integers in monotonically non-increasing order, and then  compare the ordered multisets lexicographically.

The \textit{circular width of $E(K)$}, denoted $cw(E(K))$, is the minimal circular width, $cw(E(K),D)$  over all possible circular decompositions $D$ for $E(K)$.

$E(K)$ is in \textit{circular thin position} if the circular width of the decomposition is the circular width of $E(K)$. 

If a knot $K$ is fibered we define the circular width of $K$, $cw(E(K))$, to be equal to zero.
\end{definition}

A nice property of a knot in circular thin position is that the thin surfaces are incompressible and the thick surfaces are weakly incompressible. For a proof of this fact see Theorem 3.2, \cite{MG}. 

\begin{definition}
 A circular handle decomposition $D$ for a knot exterior $E(K)$ is called a \textit{ circular locally thin} decomposition if the thin level surfaces  $F_i$'s are incompressible and the thick level surfaces $S_i$'s are weakly incompressible.
\end{definition}

\begin{definition}
$K$ is \textit{almost fibered} if there is a Seifert surface $R$ so that $E(K)$ has a circular thin decomposition of the form 

$E(K)=(R \times I) \cup N_1 \cup T_1 / (R\times 0 \sim R \times 1)$.
\end{definition}

Figure \ref{esquema_almost_fib} shows a schematic picture of  an almost fibered knot.

\begin{figure}[htp]
\labellist 
\small\hair 2pt 
\pinlabel $R$ at 234 144
\pinlabel $N_1$ at 131 228
\pinlabel $S$ at 43 149
\pinlabel $T_1$ at 139 34
\endlabellist
\centering
\includegraphics[width=4cm]{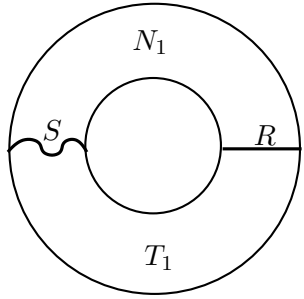}
\caption{An almost fibered knot.}
\label{esquema_almost_fib}
\end{figure}

\subsection{Behavior of circular width under connected sum}

Let us consider the knot exteriors $E(K_1)$ and $E(K_2)$. Assume they have the following circular handle decompositions:

 \begin{center}
 $E(K_1)= (F_1 \times I) \cup N_1 \cup T_1 \cup N_2 \cup T_2 \cup ... \cup N_n \cup T_n  \cup b_3^1/ (F_1\times 0 \sim F_1 \times 1)$ 
 \end{center} 

with level surfaces $F_1$, $G_1$, $F_2$..., $F_n$, $G_n$.

\begin{center}
$E(K_2)= (R_1 \times I )\cup O_1 \cup W_1 \cup O_2 \cup W_2 \cup ... \cup O_m \cup W_m \cup b_3^2/ (R_1 \times 0 \sim R_1 \times 1)$
\end{center}

with level surfaces  $R_1$, $S_1$, $R_2$...,$R_m$, $S_m$.

Let $K= K_1 \sharp K_2$. There is a natural way to obtain a circular handle decomposition for $E(K)$ as follows. Starting with the Seifert surface $R=F_1 \sharp _{\bd}R_1$ for $K$,  we attach the sequence of handles corresponding to $E(K_1)$, i.e., we attach $N_i$ and $T_i$, along the $F_1$ summand of $R$. Then we attach the sequence of handles corresponding to  $E(K_2)$, i.e., we attach  $O_j$ and $W_j$, along the $R_1$ component of $R$.  Notice that this process can be done if we choose different thin surfaces. Thus $K_1\sharp K_2$ inherits $n\times m$ circular handle decompositions each with $n+m$ thin levels and thick levels.

The thin levels for $K= K_1 \sharp K_2$ are homeomorphic to  $\{F_{i_0} \sharp R_j\} \cup \{F_i \sharp R_{j_0}\}$ and the thick levels are homeomorphic to $\{F_{i_0} \sharp S_j\} \cup \{G_i \sharp R_{j_0}\}$, for a fixed $i_0 \in \{1,2,...,n\}$ and $j_0 \in \{1,2,...,m\}$.

Figure \ref{ind_han_dec} is a schematic picture of the induced circular handle decomposition in a complement of a connected sum of two knots. 

\begin{figure}[htp]
\labellist 
\small\hair 2pt
\pinlabel {$E(K_1)$} at  34 543 
\pinlabel $F_1$ at 279 686 
\pinlabel $G_1$ at  127 832
\pinlabel $F_2$  at -15 684
\pinlabel $G_2$ at 134 528
\pinlabel $N_1$  at 189 740 
\pinlabel $T_1$ at 60 747 
\pinlabel $N_2$  at 74 625 
\pinlabel $T_2$ at 194 635 
\pinlabel $E(K_2)$ at 580 538
\pinlabel $R_1$ at 645 685
\pinlabel $S_1$ at 350 689
\pinlabel $O$ at 493 768
\pinlabel $W$ at 493 599
\pinlabel {$E(K_1 \sharp K_2)$} at 295 -18
\pinlabel {$\Gamma= S_1 \sharp_{\bd} F_1$} at 510 41
\pinlabel {$R_1 \sharp_{\bd} F_1$} at 550 196
\pinlabel {$Q_1= R_1 \sharp_{\bd} G_1$} at 500 360
\pinlabel {$\Sigma_1= R_1 \sharp_{\bd} F_2$} at 100 360
\pinlabel {$Q_2= R_1 \sharp_{\bd} G_2$} at 20 196
\pinlabel {$\Sigma_2= R_1 \sharp_{\bd} F_1$} at 100 35
\pinlabel $O$ at 305 113
\pinlabel $W$ at 380 176
\pinlabel $N_1$ at 442 268
\pinlabel $T_1$ at 305 350
\pinlabel $N_2$ at 160 270
\pinlabel $T_2$ at 152 138
\endlabellist
\centering
\includegraphics[width=8cm]{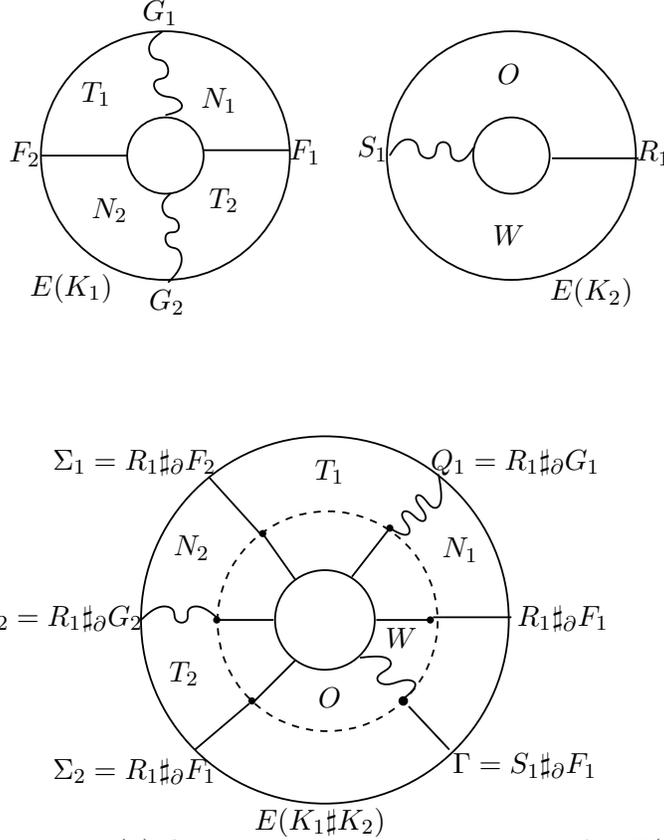}
\caption{(a) Circular handle decomposition for $E(K_1)$, (b) circular handle decomposition for $E(K_2)$ and (c) induced circular handle decomposition for $E(K_1 \sharp K_2)$.}\label{ind_han_dec}
\end{figure}

Since the Euler characteristic for the boundary connected sum equals 
$\chi(S_1\sharp_{\bd}S_2)= \chi(S_1)+\chi(S_2)-1$, then the complexity $c(S)= 1-\chi(S)$ applied to a boundary connected sum $S_1 \sharp_{\bd} S_2$ becomes equal to $c(S_1\sharp_{\bd}S_2)= c(S_1)+c(S_2)$.
  
Each  decomposition for  $E(K_1\sharp K_2)$ has circular width:

\begin{IEEEeqnarray}{rCl} 
cw_D(E(K_1\sharp K_2))= \{c(F_{i_0} \sharp S_1),...,c(F_{i_0} \sharp S_m), c(G_1 \sharp R_{j_0}), .... c(G_n\sharp R_{j_0})\} \nonumber
\end{IEEEeqnarray} 
modulo non-increasing order.

If we choose  $F_{i_0}$ to be a thin level Seifert surface for $K_1$ such that $c(F_{i_0})\leq c(F_i)$ for all $i=1,2,...,n$ and $R_{i_0}$  be a thin level  Seifert surface for $K_2$ such that $c(R_{j_0})\leq c(R_j)$ for all $j=1,2,...,m$, then the decomposition $D$ for $E(K_1\sharp K_2)$ containing $F_{i_0}$ and $R_{j_0}$ as summands of the thick levels will be the one with the smallest circular width amongst all the $n\times m$ circular decompositions.

Let us denote the circular width of such decomposition by:

\begin{equation*}
cw(E(K_1))\sharp cw(E(K_2))
\end{equation*}

which is an upper bound for  $cw(E(K_1\sharp K_2))$. Thus we have:

\begin{equation}
cw(E(K_1\sharp K_2)) \leq cw(E(K_1))\sharp cw(E(K_2))
\label{aditivo2}
\end{equation}

Moreover, in \cite{MG} it is proved  that if $K_1$ and $K_2$ are in circular thin decomposition, the circular handle decomposition induced on $K_1\sharp K_2$ is circular locally thin. Thus, is natural to ask if such decomposition is the thinnest for $K_1\sharp K_2$ and if equality in  \eqref{aditivo2} holds.

\section{Ordered n-tuples}
\label{orderedtuples}

We have defined the circular width as an ordered $n$-tuple, say $a=(a_1, a_2, ..., a_n)$ where $a_1 \geq a_2 \geq... \geq a_n$. For simplicity it will be called  just and $n$-tuple.

We can compare a $m$-tuple and a $n$-tuple using the lexicographic order. More precisely, we have the following definition.

\begin{definition}
Let  $a=(a_1, a_2, ..., a_n)$ be a $n$-tuple and let   $b=(b_1, b_2, ..., b_m)$ be  a $m$-tuple. We say that;
\begin{enumerate}
\item $a=b$ if and only if $m=n$ and $a_i=b_i$ for all $i$.  
\item $a<b$
\begin{enumerate}
\item If there exists $i_o$ such that $a_{i_o}< b_{i_o}$ and $a_i=b_i$ for all $i<i_o$, or 
\item If $n<m$ and $a_i=b_i$ for all $1\leq i \leq n$. 
 \end{enumerate}
\end{enumerate}
\end{definition}

\begin{remark}If $a$ is a $n$-tuple and $b$ is a $m$-tuple such that  $a\leq b$ and $\alpha$ a non-negative real number, then 
 $(a_1+ \alpha, ... , a_k+ \alpha ) \leq (b_1+ \alpha, ... , b_l+ \alpha)$.
\end{remark}

Given a $n$-tuple $a$ and a $m$-tuple $b$ we can define a new $(n+m)$-tuple as follows:

\begin{definition}
Let  $a$ be a $n$-tuple and $b$ be a $m$-tuple. Define the union of $a$ and $b$, denoted by $a\cup b$, as the $(n+m)$-tuple whose entries are all the elements of $\{a_1, a_2, ..., a_n, b_1, ..b_m\}$, ordered in non-increasing order. 

\noindent For instance if $a=(4,4,3,3,1,1,1)$ and $b=(7,7,5,5,5,3,3,3,1,1,1,1)$ then $a\cup b= (7,7,5,5,5,4,4,3,3,3,3,3,1,1,1,1,1,1, 1)$. 

\end{definition}

The following result is used in the proof of Theorem \ref{bigteo}:

\begin{prop}
\label{ntuples}
Let  $a$, $b$, $c$ and $d$ tuples such that $b\leq a$  and  $d\leq c$. Then $b\cup d \leq a\cup c$.
\end{prop}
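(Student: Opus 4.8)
The plan is to reduce the claim to a one-sided monotonicity of the operation $\cup$, and then verify that monotonicity by passing from the lexicographic order to an equivalent ``counting function'' description.

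First, note that $\cup$ is commutative, since it is just multiset union followed by sorting. So it is enough to prove the one-sided statement: \emph{if $b\le a$ then $b\cup e\le a\cup e$ for every tuple $e$}. Indeed, granting this we get $b\cup d\le a\cup d$ (apply it to $b\le a$ with $e=d$) and $a\cup d=d\cup a\le c\cup a=a\cup c$ (apply it to $d\le c$ with $e=a$), and transitivity of the order gives $b\cup d\le a\cup c$.

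For the one-sided statement I would introduce, for a non-increasing tuple $x=(x_1,\dots,x_n)$ and an integer $t$, the count $N_x(t)=\#\{i:x_i\ge t\}$. Two facts are needed. First, $N_{x\cup y}(t)=N_x(t)+N_y(t)$ for all $t$, since both sides count the entries of the combined multiset that are $\ge t$. Second, the order is recovered from the counts: $x\le y$ if and only if either $N_x\equiv N_y$ and $\mathrm{len}(x)\le\mathrm{len}(y)$, or there is a threshold $t_0$ with $N_x(t_0)<N_y(t_0)$ and $N_x(t)=N_y(t)$ for all $t>t_0$. This second fact is the technical heart; its proof is a routine but slightly fiddly translation between the position-by-position description of the lexicographic order and the threshold-by-threshold description, using that $x_i$ is the largest $t$ with $N_x(t)\ge i$. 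Given these, the one-sided statement is immediate: if $N_b\equiv N_a$ and $\mathrm{len}(b)\le\mathrm{len}(a)$, then $N_{b\cup e}=N_b+N_e=N_a+N_e=N_{a\cup e}$ and $\mathrm{len}(b\cup e)\le\mathrm{len}(a\cup e)$; otherwise, choosing $t_0$ for the pair $(b,a)$, we get $N_{b\cup e}(t)=N_{a\cup e}(t)$ for $t>t_0$ and $N_{b\cup e}(t_0)=N_b(t_0)+N_e(t_0)<N_a(t_0)+N_e(t_0)=N_{a\cup e}(t_0)$. Either way $b\cup e\le a\cup e$.

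The main obstacle is establishing the counting-function characterization cleanly, where two features require care: ties among equal entries, and the clause in the definition of $<$ where one tuple is a proper prefix of the other. The first is handled by working with the multiset of entries, so that $x_i=y_i$ for $i<i_0$ forces $N_x(t)=N_y(t)$ for all $t>y_{i_0}$ and one takes $t_0=y_{i_0}$ at the first discrepancy; the second is handled by observing that in the prefix case the entries of $x$ form a sub-multiset of those of $y$, so $N_x\le N_y$ pointwise and the first discrepancy is automatically in the required direction. Everything after the characterization is a two-line computation.
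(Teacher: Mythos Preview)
Your argument is correct. Both you and the paper share the same outer structure: reduce to the one-sided monotonicity $b\le a\Rightarrow b\cup e\le a\cup e$ and then apply it twice with transitivity (this is the paper's Case~3). The difference lies in how the one-sided statement is established.

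The paper proves it by direct position-tracking: given the first index $j_0$ where $d_{j_0}<c_{j_0}$, it locates where $c_{j_0}$ falls relative to the entries of $a$ and then reads off, entry by entry, the first place where $a\cup c$ and $a\cup d$ diverge. This requires several sub-subcases (where $c_{j_0}$ sits among the $a_l$'s, and separately the prefix case), each handled by explicit index arithmetic.

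Your route replaces all of that with the counting function $N_x(t)=\#\{i:x_i\ge t\}$ and the equivalence between the lexicographic order and the condition ``first threshold from above where $N_x<N_y$''. Once that equivalence is in hand, additivity $N_{x\cup e}=N_x+N_e$ makes the one-sided statement a two-line computation, and ties and the prefix clause are absorbed automatically. The cost is that the equivalence itself needs a short proof; your sketch of it (take $t_0=y_{i_0}$ at the first positional discrepancy; in the prefix case take $t_0=y_{n+1}$) is correct. One small redundancy: in your characterization the clause ``$N_x\equiv N_y$ and $\mathrm{len}(x)\le\mathrm{len}(y)$'' already forces $x=y$, since $N_x$ determines the multiset of entries and hence the length; you can simply write ``$N_x\equiv N_y$'' there.

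In short: same skeleton, different engine. Your counting-function argument is more conceptual and avoids the paper's somewhat delicate index bookkeeping, at the price of one auxiliary lemma; the paper's approach is more hands-on and self-contained but correspondingly case-heavy.
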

\begin{proof}
Case 1: If $a=b=c=d$, it is easy to verify that $a\cup c = b\cup d$.

Case 2: If $a=b$ and $d<c$.

 Subcase 2.1: There is an index $j_0$ such that $d_{j_0}< c_{j_0}$ and $d_s=c_s$ for all $s<j_0$.

Suppose there is $l_0$ such that $a_{l_0} < c_{j_0} \leq a_{l_0-1}$.  Then the $(l_0 + j_0 -2)$th-entry for both $a\cup c$ and $b\cup d$ coincide. The $(l_0+j_0-1)$th-entry for $a\cup c$ is either $c_{j_0}$ or $a_{l_0}$, by assumption $a_{l_0}< c_{j_0}$, thus it must be $c_{j_0}$. On the other hand the $(l_0+j_0-1)$th-entry for $b\cup d$ is chosen from $d_{j_0}$ and $a_{l_0}$, in either case both are strictly smaller than $c_{j_0}$, therefore $b\cup d < a\cup c$.

Suppose that $c_{j_0} < a_k$ for all $k$. If there is $l_0$ such that $a_{l_0-1}=c_{j_0-1}> a_{l_0}$, then
the  $(l_0 + j_0 -2)$th-entry for both $a\cup c$ and $b\cup d$ coincide.  The $(l_0+j_0-1)$th-entry for $a\cup c$ is either $a_{l_0}$ or $c_{j_0}$, by assumption $c_{j_0}< a_k$ for all $k$ then we must choose $a_{l_0}$, the next entry is $a_{l_0+1}$, and so on until the entry is $a_n$, then the entry that follows must be $c_{j_0}$. Similarly happens for $b\cup d$, its $(l_0+j_0-1)$th-entry is $a_{l_0}$, the next one is $a_{l_0+1}$, and so on until the entry is $a_n$, then the next entry is $d_{j_0}$ which is strictly smaller that $c_{j_0}$, thus $b\cup d < a\cup c$.

Subcase 2.2: $d$ is a $n$-tuple and $c$ is a $m$-tuple such that $n<m$ and $d_i=c_i$ for all $1\leq i \leq n$.  Suppose $a=b$ is a $k$-tuple.

If  $a_j\geq d_n$ for all $1\leq j \leq k$. Then $a\cup c$ is a $(m+k)$-tuple and $b \cup d$ is a $(n+k)$-tuple such that $n+k < m+k$ and the entries of $a\cup c$ and $b \cup d$ coincide up to the $(n+k)$th-entry. Thus $b\cup d < a \cup c$.

If there is $l_0$ such that $a_{l_0} < d_n \leq a_{l_0-1}$. Then $a\cup c$ and $b\cup d$ coincide up to the $(n+l_0-1)$th-entry which is equal to $c_n=d_n$. The remainder entries for $b\cup d$ are $a_{l_0}, a_{l_0+1}, ... , a_k$ in that order. On the other hand the remainder entries for $a\cup c$ are taken from $\{c_s , n< s \leq m\}$ and $\{a_t, l_0 \leq t \leq k \}$. Then either $a\cup c$ and $b\cup d$ are equal up to the (n+k)th-entry, or there is a $u_0 > n+l_0-1$ such that $x_{u_0}< y_{u_0}$ where $x_{u_0}$ is an entry of $b \cup d$ and $y_{u_0}$ is an entry for $a \cup c$, which imply that $b\cup d < a\cup c$.

Case 3: If $b <a$ and $d< c$. Using case 2, we have that $b \cup d < a \cup d$ and that $d\cup a < c \cup a$. These two inequalities imply $b\cup d < a \cup c$.
\end{proof}

\section{Additivity of circular width under connected sum}
\label{additivity}
First we need to prove that a circular (locally) thin handle decomposition for $E(K_1\sharp K_2)$ induces a circular handle decomposition on each summand $E(K_1)$ and $E(K_2)$.

Recall that for a connected sum of knots, $K_1\sharp K_2$, there is a decomposing sphere $\Sigma$ that intersects    $K_1\sharp K_2$ in two points. Let $A$ be the annulus in $E(K_1 \sharp K_2)$ given by $\Sigma \cap E(K_1\sharp K_2)$. 

The following proposition shows that $A$ intersects the collection of thin and thick surfaces for $E(K_1\sharp K_2)$ in essential arcs.
\begin{prop}
\label{prop1}
Suppose that $E(K_1 \sharp K_2)$ is in circular (locally) thin position  with $\F$ the family of thin surfaces and $\Su$ the family of thick surfaces. Then $\F \cup \Su$ can be isotoped to intersect $A$ only in arcs that are essential in both $A$ and $\F \cup \Su$.
\end{prop}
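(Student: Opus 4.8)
## Proof Proposal

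The plan is to run a standard innermost-disk/outermost-arc argument on the intersection $(\F\cup\Su)\cap A$, using as the governing complexity the pair consisting of the number of closed curves of intersection and then the number of inessential arcs, ordered lexicographically. We fix $\F\cup\Su$ in circular thin position and isotope the annulus $A$ (equivalently, $\Sigma$) to meet $\F\cup\Su$ transversally with this complexity minimized; the claim is that the minimizing configuration has no closed curves and no inessential arcs.

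First I would eliminate the closed curves. A closed curve $\gamma$ of $(\F\cup\Su)\cap A$ that is inessential in $A$ bounds a disk $D\subset A$; taking $\gamma$ innermost, $D$ meets $\F\cup\Su$ only in $\gamma$. If $\gamma$ is also inessential on the level surface it lies on, an isotopy across the disk in that surface removes $\gamma$ (and possibly other curves), lowering the complexity; if $\gamma$ is essential on its level surface, then $D$ is a compressing disk, contradicting that the thin surfaces are incompressible and the thick surfaces are weakly incompressible — here one has to be mildly careful with the thick case, using that a single compressing disk does not by itself violate \emph{weak} incompressibility, so one should instead push $D$ through and argue by the incompressibility of the neighboring thin surface, or invoke that in circular thin position the thick surfaces are actually incompressible in the relevant submanifold by the argument of Theorem 3.2 of \cite{MG}. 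If, on the other hand, $\gamma$ is essential in $A$, then $\gamma$ is isotopic in $A$ to the core of $A$, hence to $\partial A\subset \partial E(K_1\sharp K_2)$; then the level surface containing $\gamma$ contains an essential annulus or can be $\partial$-compressed, and a standard outermost argument together with Proposition \ref{boundred} (applied after noting the relevant piece is $\partial$-reducible) again reduces the complexity. So at the minimum there are no closed curves.

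Next, with only arcs remaining, I would remove the arcs that are inessential in $\F\cup\Su$ (i.e., cut off a disk on some level surface) — note every arc, having both endpoints on $\partial A$, which is a single core-parallel curve on the torus $\partial E(K_1\sharp K_2)$, is automatically essential in $A$, so the only possible inessential arcs are those inessential on the level surface. Take such an arc $\alpha$ outermost in $A$, cutting off a disk $E\subset A$ with $E\cap(\F\cup\Su)=\alpha$; since $\alpha$ is inessential on its level surface $P$, it cuts a disk $E'$ off $P$, and $E\cup E'$ is a disk showing that the level surface can be isotoped across $E$, removing $\alpha$ and not creating closed curves — this strictly drops the number of inessential arcs while keeping the curve count at zero, a contradiction to minimality. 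One subtlety: the isotopy of $P$ across $E\cup E'$ must be checked to remain a valid isotopy of the whole family $\F\cup\Su$ within $E(K_1\sharp K_2)$, which follows because $E$ lies in $A$ and the other level surfaces meet $A$ only in arcs, so the support of the isotopy can be taken in a neighborhood of $E\cup E'$ disjoint from the rest of the family after a small general-position adjustment.

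The main obstacle I anticipate is the case of a closed curve $\gamma$ of intersection that is inessential in $A$ but essential on a \emph{thick} surface $S_i$: weak incompressibility forbids simultaneous compressions from both sides but not a lone compressing disk, so one genuinely needs the stronger structural input that in a circular locally thin decomposition the relevant compression can be traded against an incompressibility of an adjacent thin surface (a thin/thick "untelescoping" style argument, as in \cite{MG}); carrying this out cleanly — in particular making sure the disk $D$ from $A$ is on the correct side and that pushing it across does not just shuffle the problem to another level — is where the real work lies. Everything else is routine innermost-disk and outermost-arc bookkeeping.
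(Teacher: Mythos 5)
Your reduction of the thin-surface case to incompressibility plus irreducibility matches the paper, and your arc discussion is essentially harmless (in fact each component of $\partial A$ is a meridian meeting each $\partial F_i$ and $\partial S_i$ in one point, so every level surface meets $A$ in exactly one spanning arc and there are no inessential arcs to remove). The genuine gap is precisely the case you flag as ``the main obstacle'' and then leave unresolved: a closed curve $\gamma \subset A\cap S_i$ that is inessential in $A$ but essential in the thick surface $S_i$. Neither of your suggested remedies works. The thick surfaces are \emph{not} incompressible in the submanifolds $W_i$ --- they are Heegaard surfaces of $W_i=A_i\cup_{S_i}B_i$ and in general compress to both sides, so there is no strengthening of Theorem 3.2 of \cite{MG} that upgrades weak incompressibility to incompressibility; and the innermost disk of $A$ bounded by $\gamma$ is simply a legitimate compressing disk of $S_i$ into one of the two compression bodies, compatible with weak incompressibility, so ``pushing it through to a neighboring thin level'' yields no contradiction. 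Your lexicographic minimization therefore stalls exactly where the real content of the proposition lies.

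The paper's proof devotes essentially all of its effort to this point, and the argument is structural rather than a complexity count. Writing $R_i$ for the subannulus of $A$ between the arcs $A\cap F_i$ and $A\cap F_{i+1}$, weak incompressibility forces all disk components of $R_i-S_i$ to lie on one side, say in $A_i$. One then shows the curves of $R_i\cap S_i$ are non-nested: a nested pair would produce, after compressing $S_i$ along the disks in $A_i$, a $\partial$-reducible Heegaard splitting of the compressed manifold $\bar{B_i}$, and Proposition \ref{boundred} converts this into a compressing disk for $S_i$ in $B_i$ whose boundary is disjoint from compressing disks in $A_i$, contradicting weak incompressibility. Finally the planar surface $R_{B_i}=R_i\cap B_i$ is shown to be incompressible but $\partial$-compressible in $B_i$, and a case analysis of the four types of $\partial$-compressions eliminates the remaining closed intersection curves. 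Your proposal contains none of this machinery, so as written it does not establish the proposition; you would need to supply an argument of this kind (or a genuine substitute) for essential curves on the thick levels.
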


\begin{proof}
The annulus $A$ is  properly embedded in $E(K_1 \sharp K_2)$, its boundary components are meridian disks in $\bd E(K_1 \sharp K_2)$.  We arrange $A$ and $\F \cup \Su$ to be transverse and conclude that $A$ intersects each $F_i \in \F$ and each $ S_i \in \Su$ in exactly one arc (properly embedded and essential in $A$) and a finite number of simple closed curves.
We need to remove this later curves.

Let $F_i \in \F$, since $F_i$ is incompressible and $E(K_1\sharp K_2)$ is irreducible then, using an innermost disk argument,  $A \cap F_i$ does not contain closed curves. Thus $F_i \cap A$ consists of a single  properly embedded separating essential arc  in $F_i$.

All curves in  $A\cap S_i$ are essential  in $S_i$, otherwise using the irreducibility of $E(K_1 \sharp K_2)$ we get rid of of  inessential curves.

Each $S_i$ determines a Heegaard splitting given by $A_i \cup_{S_i} B_i$, with $\bd_{-}A_i = F_i$, $\bd_{-}B_i = F_{i+1}$ and $\bd_{+}A_i = \bd_{+}B_i=S_i$. Let $R_i$ be the region on $A$ cobounded by the arcs $\alpha_i= A\cap F_i$ and $\alpha_{i+1}=A \cap F_{i+1}$.  $R_i$ contains an arc $\beta_i$ and simple closed curves contained in $A\cap S_i$.

A disk of $R_i-S_i$ compresses $S_i$ in one of the two compression bodies $A_i$ or $B_i$, say $A_i$. Since $S_i$ is weakly incompressible, all disks components of $R_i-S_i$ lie in $A_i$.

\begin{claim}
The curves in $R_i \cap S_i$ are non nested in $R_i$.
\end{claim}

If any pair of curves of $R_i \cap S_i$ are nested (they are inessential in $R_i$) then the outer curve of the innermost such pair cuts off a component $C$ of $R_i-S_i$ so that all but one of the curves in $\bd C$ are adjacent to disks in $A_i$ (thus $C \subset B_i$) and precisely one, denoted by $\gamma$, is not.
Compress $S_i$ into $A_i$ along 2-handles whose cores are the disks with boundaries on $\bd C$. Let $\bar{S_i}$ be the result of this compression. Let $\bar{B_i}$ be the 3-manifold obtained from $B_i$ by attaching these 2-handles to $B_i$. Thus $S_i$ determines a Heegaard splitting for $\bar{B_i}$, namely $\bar{B_i}= B_i \cup_{S_i} B_i'$, where $B_i'=(S_i \times I) \cup$2-handles, $\bd_{-}B_i= F_{i+1}$ and $\bd_{-}B_i'=\bar{S_i}$. A copy of the curve $\gamma$ lies in $\bar{S_i}$ and it  is the boundary of a disk $D$ in $\bar{B_i}$. Suppose that $\gamma$ is non-trivial in $\bar{S_i}$ so $D$ is a $\bd$-reducing disk for $\bar{B_i}$. Then the Heegaard splitting $\bar{B_i}= B_i \cup_{S_i}B_i'$ is $\bd$-reducible, by Proposition \ref{boundred} there is a $\bd$-reducing disk $D'$ for $\bar{B_i}$ that intersects $S_i$ in a single curve $\gamma '$. Moreover $\bd D' \subset \bar{S_i}$.

Observe that $D'$ intersects $B_i'$ in an annulus $A'$ with one boundary component on $\bar{S_i}$ and the other one on $S_i$  is $\gamma'$. $D'$ intersects $B_i$ in a disk $D''$ with $\bd D''=\gamma'$. The annulus $A'$ is a product annulus and it is contained in the region homeomorphic to $S_i \times I$. Thus the boundary components of $A'$ are disjoint from the  cores of the 2-handles attached to $S_i$. 
In particular the boundary $\gamma'$ of $D''$ is disjoint from the cores of the 2-handles attached to $S_i$.  Then $D''$ is a compression disk for $S_i$ contained in $B_i$ whose boundary is disjoint from a set of compressing disks contained in $A_i$, this fact contradicts the weakly compressibility of $S_i$. Therefore $\gamma$ must bound a disk in $\bar{S_i}$. Push the disk $\gamma$ bounds in $\bar{S_i}$ slightly into $A_i$, this is a disk $D$ in $A_i$ whose boundary is parallel to $\gamma$ in the component  of $R_i$ adjacent to $C$ across $\gamma$. Replacing the subdisk of $R_i$ bounded by $\gamma$ by the disk $D$ allows us to remove the nested curves.

Thus $S_i \cap R_i$ contains one arc and non nested curves.

Let $R_{A_i}$ denote $R_i \cap A_i$ and  $R_{B_i}$ denote $R_i \cap B_i$. In $R_{B_i}$ there are non nested closed curves that bound disks in $A_i$. We can assume that $R_{A_i} \cap S_i$ is empty, otherwise we must see nested curves.

$R_{B_i}$ is a planar surface contained in $B_i$. $R_{B_i}$ is incompressible, for otherwise, by doing a compression we get a surface $A_i'$ isotopic to $A_i$ with fewer intersections with $S_i$. However $R_{B_i}$ must be $\bd$-compressible. This can be seen by looking at the intersections of $R_{B_i}$ with a collection of meridian disks and spanning annuli in $B_i$. There are 4 types of $\bd$-compressions, determined by the types of arcs shown in Figure \ref{tipo1}.
\begin{figure}[htp]
\labellist 
\small\hair 2pt 

\pinlabel $R_{A_i}$ at 215 150
\pinlabel $R_{B_i}$ at 205 300
\pinlabel $1$ at 150 200
\pinlabel $2$ at 350 265
\pinlabel $3$ at 210 210
\pinlabel $4$ at 332 300
\endlabellist 
\centering
\includegraphics[width=8cm]{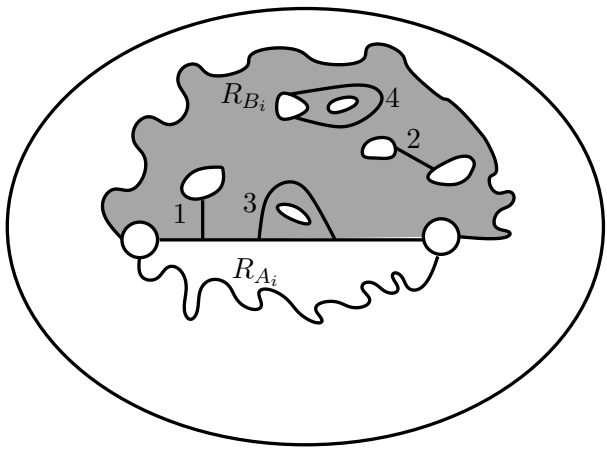}
\caption{Four types of arcs} \label{tipo1}
\end{figure}

If $\Delta$ is a $\bd$-compressing disk for $R_{B_i}$ where $\bd \Delta = \delta_1 \cup \delta_2$, $\delta_1 \subset R_{B_i}$, $\delta_2 \subset S_i$, then a boundary compression along $\Delta$ pushes a regular neighborhood of $\delta_1$ into $A_i$. After performing  boundary compressions corresponding to arcs of type 1 and 2  the number of curves of intersection is decreased by 1. By performing boundary compressions corresponding to arcs  of type 3 and 4, nested curves are generated, which can be removed. See Figure \ref{tipo2}.

\begin{figure}[htp]
\labellist 
\small\hair 2pt 
\pinlabel $R_{A_i}$ at 215 150
\pinlabel $R_{B_i}$ at 184 300
\endlabellist 
\centering
\includegraphics[width=8cm]{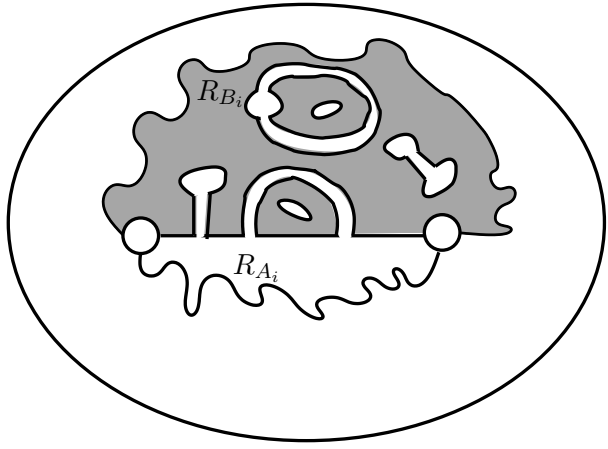}
\caption{$R_{B_i}$ after boundary compressing}\label{tipo2}
\end{figure}

Thus $R_i \cap S_i$ contains only one arc. Therefore the annulus $A$ intersects each $S_i \in \Su$ in one arc.

\end{proof}

This proposition allows us to push 1-handles and 2-handles away from the annulus $A$. Moreover a collection of 1-handles $N_i$ (or a collection of 2-handles $T_i$) can be pushed away from $A$ in such a way that $N_i$ (or $T_i$) is totally contained in $E(K_j)\cap E(K_1\sharp K_2)$, for some $j=1,2$.

In other words, a circular (locally) thin decomposition for $E(K_1\sharp K_2)$ induces circular locally thin decompositions for $E(K_1)$ and $E(K_2)$.

\begin{coro}
\label{coro1}
Suppose  $K= K_1 \sharp K_2$ is  in circular (locally) thin position. Let $E(K)= (F \times I) \cup N_1 \cup T_1 \cup ... \cup N_m \cup T_m / F \times {-1} \sim F \times{1}$
be a handle decomposition realizing a circular (locally) thin position. Let $\mathcal{N}$ be the collection of $N_i$'s, let $\mathcal{T}$ be the collection of $T_i$'s.  Then there  are subcollections $\mathcal{N}_1$ and $\mathcal{N}_2$ of $\mathcal{N}$ such that $\mathcal{N}_1 \cup \mathcal{N}_2 = \mathcal{N}$ and $\mathcal{N}_1 \cap \mathcal{N}_2 = \emptyset$, and  subcollections $\mathcal{T}_1$ and $\mathcal{T}_2$ of $\mathcal{T}$ such that $\mathcal{T}_1 \cup \mathcal{T}_2 = \mathcal{T}$ and $\mathcal{T}_1 \cap \mathcal{T}_2 = \emptyset$, such that $\mathcal{N}_i$ and $\mathcal{T}_i$ define a circular handle decomposition for $E(K_i)$, $i=1,2$.
\end{coro}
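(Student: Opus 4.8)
The plan is to use Proposition \ref{prop1} to isotope the annulus $A$ off every handle of the given circular handle decomposition, and then to cut $E(K)$ open along $A$. Since $\Sigma$ is a decomposing sphere for $K=K_1\sharp K_2$, this cut separates $E(K)$ into two pieces homeomorphic to $E(K_1)$ and $E(K_2)$, with $A$ appearing as a meridional annulus in each $\partial E(K_j)$; and every handle, being connected and disjoint from $A$, must then lie entirely in one of the two pieces. Collecting the 1-handles that land in $E(K_j)$ into $\mathcal{N}_j$ and the 2-handles into $\mathcal{T}_j$ gives the required partitions, so the work that remains is (a) to push the handles off $A$ and (b) to check that what the cut induces on each $E(K_j)$ really is a circular handle decomposition.

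For (a) I would argue stage by stage. Fix $i$ and consider $W_i=A_i\cup_{S_i}B_i$. The proof of Proposition \ref{prop1} already shows that $A$ meets $W_i$ in a single disk $R_i$, cobounded by the arcs $A\cap F_i$, $A\cap F_{i+1}$ and two arcs on $\partial E(K)$, and that the single arc $A\cap S_i$ cuts $R_i$ into two disks $R_{A_i}=R_i\cap A_i$ and $R_{B_i}=R_i\cap B_i$. Viewing $A_i$ as the compression body obtained from $F_i\times I$ by attaching the 1-handles $N_i$, I would intersect $R_{A_i}$ with the belt disks of those 1-handles: closed intersection curves are removed using the irreducibility of $E(K)$, and an outermost arc on a belt disk lets one isotope the corresponding 1-handle across $R_{A_i}$, lowering the number of intersection arcs. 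Iterating, the 1-handles $N_i$ become disjoint from $R_{A_i}$, hence from $A$. The symmetric argument, applied to $B_i$ regarded as $F_{i+1}\times I$ with the 1-handles dual to the 2-handles $T_i$ attached, makes $T_i$ disjoint from $A$ as well. All of these isotopies are supported inside the respective pieces $W_i$, which are separated from one another by the thin surfaces $F_i$ that $A$ already meets in single essential arcs, so they can be carried out simultaneously; afterwards $A$ is disjoint from $\mathcal{N}\cup\mathcal{T}$ and still meets the base Seifert surface $F=F_1$ in a single essential arc.

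For (b), cutting $E(K)$ along $A$ carries the given decomposition to a handle structure on $E(K_1)\sqcup E(K_2)$. The arc $A\cap F$ separates $F$ into $F^1$ and $F^2$, where $F^j=F\cap E(K_j)$ is a Seifert surface for $K_j$, so the product piece $F\times I$ restricts to $F^j\times I$ inside $E(K_j)$; the handles of $\mathcal{N}_j\cup\mathcal{T}_j$ are attached to it in the cyclic order inherited from $E(K)$ --- just delete from the sequence $N_1,T_1,\dots,N_m,T_m$ the handles that do not lie in $E(K_j)$ --- and any stage that becomes empty after this deletion is absorbed into a neighbouring one. Thus $\mathcal{N}_j$ and $\mathcal{T}_j$ define a circular handle decomposition of $E(K_j)$ for $j=1,2$, as claimed.

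The step I expect to be the main obstacle is (a): pinning down the local picture that $A\cap W_i$ is a disk which the arc $A\cap S_i$ splits into the disks $R_{A_i}$ and $R_{B_i}$ --- this is exactly where the incompressibility of the $F_i$ and the weak incompressibility of the $S_i$ enter, through Proposition \ref{prop1} --- and then making the outermost-arc isotopies that move the 1- and 2-handles off $A$ in a way that is compatible across the different $W_i$. Verifying (b), namely that the induced cyclic sequence on each summand is a genuine circular handle decomposition rather than a degenerate one, is routine but worth stating explicitly.
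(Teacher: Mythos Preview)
Your outline is sound and would prove the corollary as stated, but it proceeds differently from the paper. After invoking Proposition~\ref{prop1}, you spend step~(a) on outermost--arc arguments against the belt disks and cocores to slide each individual handle off $A$, and then in step~(b) you simply sort the handles by which side of $A$ they fall on. The paper is both shorter and sharper here: once $A$ meets each $S_i$ in a single arc separating $S_i$ into $S_i'$ and $S_i''$, the paper observes that a $1$--handle of $N_i$ on the $F_i'$ side and a $2$--handle of $T_i$ on the $S_i''$ side would give compressing disks for $S_i$ on opposite sides with disjoint boundaries, contradicting weak incompressibility of $S_i$. Thus the \emph{entire block} $N_i\cup T_i$ lands in a single summand --- no handle slides needed. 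This is a second, essential use of weak incompressibility (you treat it as entering only through Proposition~\ref{prop1}), and it yields the stronger structural statement that the level surfaces of $K_1\sharp K_2$ decompose as $F_i'\sharp F_i''$ with one factor unchanged as $i$ varies within a block. That stronger form is exactly what the proof of Theorem~\ref{bigteo} uses when it writes $U_i\simeq T_1'\sharp U_i''$, etc.; your approach would recover it only after the rearrangement noted in the Remark following the corollary. So your argument works, but the paper's is more direct and better adapted to the downstream application.
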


\begin{proof}
By Proposition \ref{prop1} the annulus $A= \Sigma \cap E(K_1 \sharp K_2)$ intersects each $F_i$ and each $S_i$ in a single arc. Each arc $A \cap F_i$ separates $F_i$  in $F_i'$(contained in $E(K_1)$) and $F_i''$ (contained in $E(K_2)$), and each arc $A\cap S_i$ separates $S_i$ in $S_i'$ (contained in $E(K_1)$)  and $S_i''$ (contained in $E(K_2)$).  If one 1-handle of $N_i$ is attached to $F_i'\times I$ and a 2-handle of $T_i$ is attached to $S_i''\times I$ (or viceversa), then  these handles determine compressing disks for $S_i$ which are disjoint and lie on opposite sides of $S_i$, which contradicts that $S_i$ is weakly incompressible. Therefore the collection of 1-handles $N_i$ must be attached either to $F_i'\times I$ or to $F_i''\times I$, say $F'_i \times I$, and then the collection of 2-handles $T_i$ is attached along $S_i'$.

Since $F_i \simeq F_i' \sharp F_i''$, if the collection $N_i$ has been attached along $F_i'$ (or $F_i''$) then the surface $S_i$ is homeomorphic to $S_i' \sharp F_i''$ (or $F'_i \sharp S_i''$).

In general we will see the following: Begin with the surface $F_1=F\simeq F_1' \sharp F_1''$, we will see a subcollection $\mathcal{N}_j^1$ of $\mathcal{N}$ and a subcollection $\mathcal{T}_j^1$ of                   $\mathcal{T}$ contained in $E(K_j)$, say $j=1$. In other words there is $i_0$ ($1\leq i_0 \leq m$) such that for every $1 \leq i \leq i_0$, the handles $N_i$ are attached along $F_i'$ and the handles $T_i$ are attached along $S_i'$.

If $i_0=m$ then  $\mathcal{N}$ and $\mathcal{T}$ happened to be contained, say in $E(K_1)$, then the knot $K_2$ is fibered and $\mathcal{N}_1=\mathcal{N}$, $\mathcal{N}_2=\emptyset$, $\mathcal{T}_1=\mathcal{T}$, $\mathcal{T}_2=\emptyset$.

If $i_0 < m$ then there is a subcollection $\mathcal{N}_2^1$ of $\mathcal{N}- \mathcal{N}_1^1$ and a subcollection $\mathcal{T}_2^1$ of $\mathcal{T}-\mathcal{T}_1^1$ contained in $E(K_2)$. In other words there is $i_1$ ($i_0+1\leq i_1 \leq m$) such that for every $i_0+1 \leq i \leq i_1$, the handles $N_i$ are attached along $F_i''$ and the handles $T_i$ are attached along $S_i''$.

If $i_1 < m$ then there is a subcollection $\mathcal{N}_1^2$ of $\mathcal{N}- (\mathcal{N}_1^1 \cup \mathcal{N}_2^1)$ and a subcollection $\mathcal{T}_1^2$ of $\mathcal{T}-(\mathcal{T}_1^1\cup \mathcal{T}_2^1)$ contained in $E(K_1)$. In other words there is $i_2$ ($i_1+1\leq i_2 \leq m$) such that for every $i_1+1 \leq i \leq i_2$, the handles $N_i$ are attached along $F_i'$ and the handles $T_i$ are attached along $S_i'$.

We conclude when $i_s=m$, then we have a subcollection $\mathcal{N}_j^s$ of $\mathcal{N}-( \mathcal{N}_1^1\cup \mathcal{N}_2^1 \cup \mathcal{N}_1^2 \cup... \cup \mathcal{N}_{j' \neq j}^{s-1})$ and a subcollection $\mathcal{T}_j^s$ of $\mathcal{T}-( \mathcal{T}_1^1\cup \mathcal{T}_2^1 \cup \mathcal{T}_1^2 \cup... \cup \mathcal{T}_{j'}^{s-1})$ contained in $E(K_j)$, where  $j,j'\in\{1,2\}$ and $j\neq j'$.

Thus the subcollection $\mathcal{N}_j$ is given by $\cup \mathcal{N}_j^k$ and the subcollection $\mathcal{T}_j$ is given by $\cup \mathcal{T}_j^k$, for $j=1,2$.  This proves the corollary.
 \end{proof}

\begin{remark} 
It is not hard to see that we can rearrange the collections of 1-handles and 2-handles in such a way that we first glue all handles contained in one summand, say $E(K_1)$, and then all the handles contained in $E(K_2)$.  
\end{remark}

The following result is an  immediate consequence:

\begin{coro}
If $K=K_1 \sharp K_2$ is almost fibered then either $K_1$ or $K_2$ is fibered, say $K_1$, and $K_2$ is  not fibered.
\end{coro}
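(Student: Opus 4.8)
The plan is to unwind Corollary~\ref{coro1} in the special case where the circular thin decomposition has a single block of handles. Since $K=K_1\sharp K_2$ is almost fibered — which we take to include that $K$ itself is not fibered — fix a Seifert surface $R$ and a circular thin decomposition
\[
E(K)=(R\times I)\cup N_1\cup T_1/(R\times 0\sim R\times 1)
\]
realizing its circular width. Because $K$ is not fibered this decomposition has at least one handle, and an Euler characteristic count forces $|N_1|=|T_1|$, so $N_1$ and $T_1$ are both nonempty. This decomposition is circular thin, hence circular locally thin, so Corollary~\ref{coro1} and the argument proving it apply with $\mathcal{N}=\{N_1\}$ and $\mathcal{T}=\{T_1\}$.

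Next I would recall the mechanism behind Corollary~\ref{coro1}. Writing $A=\Sigma\cap E(K)$ for the annulus cut out by a decomposing sphere $\Sigma$, Proposition~\ref{prop1} lets us isotope so that $A$ meets $R$ in a single essential arc, splitting $R$ as a boundary connected sum $R=F_1'\sharp_{\bd}F_1''$ with $F_1'$ a Seifert surface for $K_1$ and $F_1''$ one for $K_2$. Weak incompressibility of the thick surface $S_1$ then forces \emph{all} of $N_1$ to be attached along $F_1'\times I$, or \emph{all} along $F_1''\times I$, with $T_1$ attached along the matching side of $S_1$; any other distribution would yield disjoint compressing disks for $S_1$ on opposite sides. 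Thus $N_1$ and $T_1$ cannot be split between the two summands, and in the bookkeeping of Corollary~\ref{coro1} we are forced into the case $i_0=m=1$. Relabelling the summands if necessary, we may assume $N_1$ and $T_1$ both lie in the side of $A$ that yields $E(K_2)$; then the circular handle decomposition induced on $E(K_1)$ is just $(F_1'\times I)/(F_1'\times 0\sim F_1'\times 1)$, so $K_1$ is fibered with fiber $F_1'$.

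Finally, one checks that $K_2$ is not fibered: otherwise $K=K_1\sharp K_2$ would be a connected sum of two fibered knots and hence fibered, contradicting that $K$ is almost fibered. (Alternatively, staying inside the paper, \eqref{aditivo2} would give $cw(E(K))\le cw(E(K_1))\sharp cw(E(K_2))=0$, which contradicts that an almost fibered knot has positive circular width.) This establishes the statement. The only substantive ingredient is the step borrowed from the proof of Corollary~\ref{coro1} — that weak incompressibility of the single thick surface prevents the $1$- and $2$-handles from straddling the decomposing sphere — and that is exactly the part already available to us; the rest is bookkeeping and the classical fact on connected sums of fibered knots. I expect this last, already-proven, weak-incompressibility step to be the only real obstacle.
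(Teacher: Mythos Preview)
Your proof is correct and is exactly the argument the paper has in mind: the corollary is stated there as an ``immediate consequence'' of Corollary~\ref{coro1}, and you have simply written out that consequence in the special case $m=1$, invoking the weak-incompressibility step from the proof of Corollary~\ref{coro1} to force all of $N_1$ and $T_1$ to one side of $A$, and then appealing to the classical fact that a connected sum of fibered knots is fibered to rule out both summands being fibered. Your explicit remark that ``almost fibered'' must exclude ``fibered'' is also the correct reading, since otherwise the statement fails.
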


Now we are ready to prove:

\begin{theo}
\label{bigteo}
Let $K_1$ and $K_2$ be knots in $S^3$. The equation 

$cw(E(K_1\sharp K_2)) = cw(E(K_1))\sharp cw(E(K_2))$

holds for the following cases:

\begin{enumerate}
\item $K_1$ and $K_2$ are fibered knots.
\item $K_1$ is fibered and $K_2$ is not fibered.
\item $K_1$ and $K_2$ are non-fibered knots. $E(K_1)$ and $E(K_2)$ have circular thin positions containing minimal genus Seifert surfaces as a thin level.
\end{enumerate}

\end{theo}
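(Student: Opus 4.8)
The plan is to prove, in each of the three cases, the reverse of inequality \eqref{aditivo2}, i.e. $cw(E(K_1\sharp K_2)) \geq cw(E(K_1))\sharp cw(E(K_2))$; with \eqref{aditivo2} this yields equality. I will write $g(K_i)$ for the genus of $K_i$, so a minimal genus Seifert surface for $K_i$ has complexity $2g(K_i)$ and, since complexity is non-negative and every Seifert surface for $K_i$ has genus at least $g(K_i)$, any (possibly disconnected) surface containing a Seifert surface for $K_i$ has complexity at least $2g(K_i)$.

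First I would fix a circular handle decomposition $D$ of $E(K_1\sharp K_2)$ realizing circular thin position. It is then circular locally thin, so Proposition \ref{prop1} and Corollary \ref{coro1} apply: the decomposing annulus $A$ meets every thin and every thick level of $D$ in a single essential arc, and the handles of $D$ occur in consecutive blocks, each block contained entirely in one of $E(K_1)$, $E(K_2)$, which assemble into circular handle decompositions $D_1$ of $E(K_1)$ and $D_2$ of $E(K_2)$. The core step is then the inequality
\begin{equation*}
cw(E(K_1\sharp K_2)) = cw(E(K_1\sharp K_2),D) \geq \bigl(cw(E(K_1),D_1)+2g(K_2)\bigr)\cup\bigl(cw(E(K_2),D_2)+2g(K_1)\bigr),
\end{equation*}
where ``$+\alpha$'' adds $\alpha$ to every entry of a tuple and $\cup$ is the operation of Section \ref{orderedtuples}. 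To prove it, partition the thick levels of $D$ into those inside an $E(K_1)$-block and those inside an $E(K_2)$-block. A thick level $S$ inside an $E(K_1)$-block is cut by $A$ as $S=S'\sharp_{\bd}\sigma''$, where $S'$ runs over precisely the thick levels of $D_1$ and $\sigma''$ is the $E(K_2)$-side surface, which is constant throughout that block and hence is a thin level of $D_2$ (or the starting Seifert surface for $K_2$), so $c(\sigma'')\geq 2g(K_2)$ and $c(S)=c(S')+c(\sigma'')\geq c(S')+2g(K_2)$. This is an entrywise domination of two multisets in bijection, hence of the corresponding ordered tuples, so the thick levels in $E(K_1)$-blocks give a tuple $\geq cw(E(K_1),D_1)+2g(K_2)$, and symmetrically those in $E(K_2)$-blocks give a tuple $\geq cw(E(K_2),D_2)+2g(K_1)$; the Remark on adding a constant and Proposition \ref{ntuples} (used inductively for the entrywise-to-ordered step) then combine these into the displayed inequality.

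Since $cw(E(K_i),D_i)\geq cw(E(K_i))$ by minimality of circular width, one more use of that Remark and Proposition \ref{ntuples} upgrades the displayed inequality to
\begin{equation*}
cw(E(K_1\sharp K_2)) \geq \bigl(cw(E(K_1))+2g(K_2)\bigr)\cup\bigl(cw(E(K_2))+2g(K_1)\bigr),
\end{equation*}
and it remains to identify the right-hand side with $cw(E(K_1))\sharp cw(E(K_2))$. By the construction in Section \ref{circularthin} this equals $\bigl(cw(E(K_2))+c(F_{i_0})\bigr)\cup\bigl(cw(E(K_1))+c(R_{j_0})\bigr)$, with $F_{i_0}$, $R_{j_0}$ thin levels of least complexity in chosen circular thin positions of $E(K_1)$, $E(K_2)$, and always $c(F_{i_0})\geq 2g(K_1)$, $c(R_{j_0})\geq 2g(K_2)$. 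In case (1) both $cw(E(K_i))$ are empty tuples, so both sides are $0$; in fact \eqref{aditivo2} and non-negativity of complexity already give $cw(E(K_1\sharp K_2))=0$. In case (2), $K_1$ fibered makes its circular thin decomposition the product on its fiber, a minimal genus surface, so $cw(E(K_1))$ is empty, there is no $G_i\sharp R_{j_0}$ term, and $c(F_{i_0})=2g(K_1)$, whence $cw(E(K_1))\sharp cw(E(K_2))=cw(E(K_2))+2g(K_1)$, matching the bound. In case (3), the hypothesis that each $E(K_i)$ has a circular thin position containing a minimal genus Seifert surface as a thin level gives $c(F_{i_0})=2g(K_1)$ and $c(R_{j_0})=2g(K_2)$, so the right-hand side is exactly $cw(E(K_1))\sharp cw(E(K_2))$. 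In all three cases this gives the desired reverse inequality, which with \eqref{aditivo2} proves the theorem.

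I expect the main obstacle to be the bookkeeping in the core inequality: matching the thick levels of the thin position $D$ with those of the induced decompositions $D_1$, $D_2$ through the block structure of Corollary \ref{coro1}, controlling the complexity shift $c(S)=c(S')+c(\sigma'')$ via the genus bound, and handling the degenerate case of a fibered summand, where the induced decomposition and hence its tuple is empty. A secondary delicate point is verifying case by case that the least thin-level complexity defining $cw(E(K_1))\sharp cw(E(K_2))$ really equals $2g(K_i)$; this is automatic for a fibered summand, and is exactly what the hypothesis in case (3) supplies, but fails in general, which is why the theorem requires these hypotheses.
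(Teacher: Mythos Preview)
Your proposal is correct and takes essentially the same approach as the paper: split a circular thin decomposition of $E(K_1\sharp K_2)$ along the decomposing annulus via Proposition~\ref{prop1} and Corollary~\ref{coro1}, use additivity of complexity under boundary connected sum together with the minimal-genus lower bound on the ``other side'' Seifert surface, and assemble the resulting inequalities with the Remark and Proposition~\ref{ntuples}. The only cosmetic difference is that the paper first invokes the Remark after Corollary~\ref{coro1} to rearrange the handles into two consecutive blocks (so the constant side is a single fixed thin level $T_1'$ or $T_1''$), whereas you work block by block and bound each such surface uniformly by $2g(K_i)$; the arguments are otherwise the same.
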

\begin{proof}
(1) Easily follows from the well known fact that connected sum of two fibered knots is fibered.

(2) Let $F$ be the fiber for $E(K_1)$ and assume $E(K_2)$ is in  circular thin position with $\{R_i\}_1^n$ the collection of thin levels and $\{ S_i\}_1^n$ the collection of thick levels.  Then $E(K_1\sharp K_2)$ inherits a circular handle decomposition with thin levels homeomorphic to the collection $\{F \sharp R_i \}_1^n$  and thick levels homeomorphic to $\{F \sharp S_i \}$, such decomposition has circular width given by $cw(E(K), D)=\{c(F\sharp S_i)\}_1^n$ modulo non-increasing order. 

Suppose that $E(K_1\sharp K_2)$ has a circular a circular thin decomposition with thin levels $\{T_j \}_1^m$ and thick levels $\{U_j\}_1^m$. Proposition \ref{prop1} together with the assumption that $K_1$ is fibered imply that $T_j= F \sharp T'_j$ and $U_j=F\sharp U_j'$, inducing a circular handle decomposition on $E(K_2)$ with thick levels $\{U'_j\}$.  Such decomposition has circular width $cw(E(K_2), D')=\{c(U'_j\}_1^m$ modulo non-increasing order. Thus we have the following inequality:

\begin{equation}
\{c(S_i)\}\leq \{c(U_j')\}
\label{eq1}
\end{equation}
modulo non-increasing order.

If we add $c(F)$ to both sides of equation \eqref{eq1} we obtain:

\begin{displaymath}
\{ c(S_i)+c(F)\}=\{c(F\sharp S_i) \} \leq \{c(U_j')+c(F) \}=\{c(U_j) \}
\end{displaymath}
modulo non-increasing order, which is equivalent to:

\begin{displaymath}
cw(E(K_1))\sharp cw(E(K_2))\leq cw(E(K_1\sharp K_2))
\end{displaymath}.

(3) Let $D_1$ be a circular handle decomposition for $E(K_1)$ which realizes $cw(E(K_1))$. Let $\{ F_i\}_{i=1}^k$ be the collection of thin levels and $\{G_i\}_{i=1}^k$ be the collection of thick levels for $D_1$. Then $cw(E(K_1))=\{c(G_i) \}_{i=1}^k$ modulo non-increasing order.

Let $D_2$ be a circular handle decomposition for $E(K_2)$ which realizes $cw(E(K_2))$. Let $\{ R_j\}_{j=1}^l$ be the collection of thin levels and $\{S_j\}_{j=1}^l$ be the collection of thick levels for $D_2$. Then $cw(E(K_2))=\{c(S_j) \}_{j=1}^l$ modulo non-increasing order.

Assume that $F_1$ and $R_1$ are minimal genus Seifert surfaces for $K_1$ and $K_2$, respectively.

We know that $D_1$ and $D_2$ induce a circular locally thin decomposition $D$ on $E(K_1\sharp K_2)$ with circular width given by:

\begin{center}
$cw(E(K_1 \sharp K_2), D)= cw(E(K_1))\sharp cw(E(K_2))= \{ c(G_i \sharp R_1)\} \cup \{c(F_1 \sharp S_j) \}$
\end{center}
Modulo non-increasing order.

Moreover we know that 

$cw(E(K_1 \sharp K_2)) \leq cw(E(K_1))\sharp cw(E(K_2))$.

In order to prove that the equality holds we need to show that $cw(E(K_1))\sharp cw(E(K_2)) \leq cw(E(K_1\sharp K_2))$.

Suppose that $D'$ is a circular decomposition for $E(K_1 \sharp K_2)$ which realizes $cw(E(K_1 \sharp K_2))$. Let $\{T_i\}_{i=1}^m$ be the collection of thin levels and $\{U_i\}_{i=1}^m$ be the collection of thick levels for $D'$. Then $cw(E(K_1\sharp K_2))=\{c(U_i) \}_{i=1}^m$ modulo non-increasing order.

By Proposition \ref{prop1} each $T_i$ and $U_i$ is homeomorphic to a boundary connected sum of Seifert surfaces. Using Corollary \ref{coro1} we can find $s\in \{1,2, ... , m \}$ such that:

\begin{flushleft}
$T_1 \simeq T_1' \sharp T_1''$\\
$U_i \simeq T_1' \sharp U_i''  \qquad 1\leq i < s$\\
$T_i \simeq T_1' \sharp T_i''  \qquad 1\leq i < s$\\
$T_s \simeq T_1 \sharp T_1''$\\
$U_i \simeq U'_i \sharp T_1'' \qquad s\leq i \leq m$\\
$T_i \simeq T_i' \sharp T_1'' \qquad  s< i < m$\\
$T_m=T_1$.
\end{flushleft}

Thus $E(K_1)$ inherits a circular decomposition $D_1'$ with thin levels $\{ T_1' \} \cup \{ T_i'  ; s < i < m\}$ and thick levels $\{ U_i' :  s \leq i \leq m\}$. Then $cw(E(K_1), D_1')=\{ c(U'_i)\}$ modulo non-increasing order.

Also $E(K_2)$ inherits a circular decomposition $D_2'$ with thin levels $\{ T_1'' \} \cup \{ T_i''  ; 1 < i < s\}$ and thick levels $\{ U_i'' :  1 \leq i <s \}$. Then $cw(E(K_2), D_2')=\{ c(U''_i)\}$ modulo non-increasing order.

Also we know;

\begin{equation}
cw(E(K_1)) \leq cw(E(K_1), D_1')    \qquad  cw(E(K_2)) \leq cw(E(K_2), D_2')
\label{1}
\end{equation}

The following equations are true as well

\begin{equation}
c(U_i')+c(T_1'')=c(U_i) \qquad c(U_j'')+c(T_1')=c(U_j)
\label{2}
\end{equation}

\begin{equation}
c(F_1)\leq c(T_1') \qquad c(R_1)\leq c(T_1'')
\label{3}
\end{equation}

Remember that $F_1$ is a minimal genus Seifert surface for $K_1$  and $R_1$ is a minimal genus Seifert surface for $K_2$.

Equations \eqref{1}, \eqref{2} and  \eqref{3} imply the following:

\begin{equation}
\{c(G_i)+c(R_1) \} \leq \{ c(G_i) + c(T_1'')\}\leq \{c(U_i')+c(T_1'') \}=\{ c(U_i)\}
\label{4}
\end{equation}

and 

\begin{equation}
\{c(S_j)+c(F_1) \} \leq \{ c(S_i) + c(T_1')\}\leq \{c(U_j'')+c(T_1') \}=\{ c(U_j)\}
\label{5}
\end{equation}

Modulo non-increasing order.

Notice that ;

\begin{displaymath}
\{c(G_i)+c(R_1) \} \cup \{c(S_j)+c(F_1) \} = cw(E(K_1)) \sharp cw(E(K_2)) 
\end{displaymath}

and

\begin{displaymath}
\{c(U_i) \} \cup \{c(U_j)\} = cw(E(K_1\sharp K_2)). 
\end{displaymath}

Then applying Proposition \ref{ntuples} to the lefthand side and righthand side of equations \eqref{4} and \eqref{5} , we obtain 

\begin{displaymath}
cw(E(K_1)) \sharp cw(E(K_2)) \leq cw(E(K_1\sharp K_2))
\end{displaymath}
 
 This proves the theorem.
\end{proof}

The following question remains open;

\begin{ques}
Does a knot in circular thin position contain a minimal genus Seifert as a thin surface?
\end{ques}

There is evidence that a minimal genus Seifert surface appears in a circular thin position.
All non fibered knots up to ten crossings are almost fibered and the thin surface appearing in the circular thin decomposition is of minimal genus.  

If the answer to the question is affirmative, then the additivity of circular width would be true in general.


\begin{thebibliography}{999}
\bibitem[MG]{MG} Fabiola Manjarrez-Guti\'errez, \textit{Circular thin position for knots in $S^3$}, Algebraic and Geometric Topology 9 (2009) 429-454.
\bibitem[S]{S} M. Scharlemann, \textit{Heegaard splitings of compact 3-manifolds}, Handbook of geometric topology, ed by R. Daverman and R. Sherr, 921--953, North-Holland, Amsterdam, 2002,  arXiv:math.GT/0007144.

\end{thebibliography}
\end{document}